\numberwithin{equation}{section}
\newtheoremstyle{thmlemcorr}{10pt}{10pt}{\itshape}{}{\bfseries}{.}{10pt}{{\thmname{#1}\thmnumber{ #2}\thmnote{ (#3)}}}
\newtheoremstyle{thmlemcorr*}{10pt}{10pt}{\itshape}{}{\bfseries}{.}\newline{{\thmname{#1}\thmnumber{ #2}\thmnote{ (#3)}}}
\newtheoremstyle{defi}{10pt}{10pt}{\itshape}{}{\bfseries}{.}{10pt}{{\thmname{#1}\thmnumber{ #2}\thmnote{ (#3)}}}
\newtheoremstyle{remexample}{10pt}{10pt}{}{}{\bfseries}{.}{10pt}{{\thmname{#1}\thmnumber{ #2}\thmnote{ (#3)}}}
\newtheoremstyle{ass}{10pt}{10pt}{}{}{\bfseries}{.}{10pt}{{\thmname{#1}\thmnumber{ A#2}\thmnote{ (#3)}}}
\theoremstyle{thmlemcorr}
\newtheorem{theorem}{Theorem}
\numberwithin{theorem}{section}
\newtheorem{corollary}[theorem]{Corollary}
\theoremstyle{thmlemcorr*}
\newtheorem{theorem*}{Theorem}
\newtheorem{lemma*}[theorem]{Lemma}
\newtheorem{corollary*}[theorem]{Corollary}
\newtheorem{proposition*}[theorem]{Proposition}
\newtheorem{problem*}[theorem]{Problem}
\newtheorem{conjecture*}[theorem]{Conjecture}
\theoremstyle{defi}
\newtheorem{definition}[theorem]{Definition}
\theoremstyle{remexample}
\newtheorem{remark}[theorem]{Remark}
\theoremstyle{ass}
\newtheorem{assumption}{Assumption}
\newcommand{\Hcal}{\mathcal{H}}
\newcommand{\Xcal}{\mathcal{X}}
\newcommand{\Sbb}{\mathbb{S}}
\DeclareMathOperator{\id}{id}
\newcommand{\di}{\mathrm{d}}
\newcommand{\sinc}{\mathrm{sinc}}
\newcommand{\R}{\mathbb{R}}
\newcommand{\C}{\mathbb{C}}
\newcommand{\eps}{\epsilon}
\def\XXint#1#2#3{{\setbox0=\hbox{$#1{#2#3}{\int}$} 
\vcenter{\hbox{$#2#3$}}\kern-.5\wd0}}
\newcommand{\p}{\partial}
\renewcommand{\eps}{\varepsilon}
\renewcommand{\phi}{\varphi}
\title{Uniform convergence in von Neumann's ergodic theorem in the absence of a spectral gap}
\author{Jonathan Ben-Artzi}
\author{Baptiste Morisse}
\address{School of Mathematics, Cardiff University, Cardiff CF24 4AG, Wales, UK}
\email{Ben-ArtziJ@cardiff.ac.uk}
\email{MorisseB@cardiff.ac.uk}
\begin{document}
\maketitle

\begin{abstract}
Von Neumann's original proof of the ergodic theorem is revisited. A uniform convergence rate is established under the assumption that one can control the density of the spectrum of the underlying self-adjoint operator when restricted to suitable subspaces. Explicit rates are obtained when the bound is polynomial, with applications to the linear Schr\"odinger and wave equations. In particular, decay estimates for time-averages of solutions are shown. 

\vspace{4pt}

\noindent\textsc{MSC (2010): 37A30 (primary); 37A25, 37A10, 37C40, 35Q41, 35L05} 

\vspace{4pt}

\noindent\textsc{Keywords:} Von Neumann's ergodic theorem; convergence rates; density of states

\vspace{4pt}

\noindent\textsc{Acknowledgements:} The authors acknowledge support from Fellowship EP/N020154/1 of the Engineering and Physical Sciences Research Council (EPSRC). The authors are grateful to the anonymous referee who read the manuscript carefully and helped to significantly improve it.

\vspace{4pt}

\noindent\textsc{Date:} \today.
\end{abstract}

\hypersetup{
  pdfauthor = {Jonathan Ben-Artzi (University of Cambridge)},
  pdftitle = {},
  pdfsubject = {MSC (2010): xxxxx (primary); xxxxx, xxxxx},
  pdfkeywords = {}
}

\setcounter{tocdepth}{1} 
\tableofcontents

\section{Introduction}\label{sec:intro}
In this note we obtain  uniform rates of convergence in von Neumann's ergodic theorem for continuous time dynamical systems lacking a spectral gap.  It is well-known that a spectral gap leads to the rate $T^{-1}$, however systems that lack a spectral gap are not as well understood. The key new ingredient is an estimate of the \emph{Density of States} (DoS) of the generator near $0$. Such estimates are readily available for differential operators, as is shown in the examples in Section \ref{sec:ex}.

Our setup is as follows. Let $\Hcal$ be a separable Hilbert space and let $U_t:\Hcal\to\Hcal$ be a one-parameter group of unitary transformations. Let $H:D(H)\subset\Hcal\to\Hcal$ be its self-adjoint generator: $U_t=e^{itH}$. We assume  that $H$ has continuous spectrum in a neighborhood of $0$ (and $0$ itself is often an eigenvalue) and show that a  bound on the DoS near $0$  leads to a uniform convergence rate  on a suitable subspace $\Xcal\subset\Hcal$. We apply this to the linear Schr\"odinger and wave equations, to obtain various decay estimates depending on the space $\Xcal$ (which is taken to either be a weighted-$L^2$ space, or the space $L^1\cap L^2$) which consequently lead to global-in-time results, see \eqref{eq:schr-global}, \eqref{eq:schr-global2}, \eqref{eq:wave-decay-global}, \eqref{eq:wave-decay-global2}.

\subsection{Von Neumann's ergodic theorem}
Von Neumann's ergodic theorem \cite{VonNeumann1932a} is a pillar of modern mathematics.
Defining
	\begin{equation*}
	P^T:=\frac{1}{2T}\int_{-T}^TU_t\,\di t,
	\end{equation*}
and
	\begin{equation*}
	P:=\text{orthogonal projection of $\Hcal$ onto }\ker H,
	\end{equation*}
it states
\begin{theorem}[Ergodic theorem \cite{VonNeumann1932a}]\label{thm:von}
For any $f\in\Hcal$, $P^Tf\to Pf$ as $T\to+\infty$.
\end{theorem}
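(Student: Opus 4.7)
The plan is to follow von Neumann's original functional-analytic strategy, which is also the natural template for the quantitative refinements pursued later in the paper. The first step is to exploit self-adjointness of $H$ to obtain the orthogonal decomposition $\Hcal = \ker H \oplus \cl{\ran H}$, reducing the proof to showing $P^T f \to f$ for $f \in \ker H$ (which is trivial, since $U_t f = f$ for such $f$, hence $P^T f = f = Pf$) and $P^T f \to 0$ for $f \in \cl{\ran H}$ (on which $Pf = 0$).

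For the non-trivial half I would first treat vectors of the form $f = Hg$ with $g \in D(H)$. By Stone's theorem, $\tfrac{\di}{\di t} U_t g = \ii H U_t g = \ii U_t f$, so the fundamental theorem of calculus applied inside the time-average gives
\begin{equation*}
P^T f = \frac{1}{2T}\int_{-T}^T U_t f \di t = \frac{1}{2\ii T}\spr{U_T g - U_{-T} g},
\end{equation*}
and unitarity yields the telescoping bound $\norm{P^T f} \leq \norm{g}/T$. This is the crucial estimate; note that it already displays the $T^{-1}$ rate characteristic of the spectral-gap case, but only on the dense subset $\ran H$ and with a constant depending on $g$ rather than on $f$---which is consistent with the paper's thesis that additional structural information (a bound on the DoS near $0$) is required to promote this to a \emph{uniform} rate on a fixed subspace $\Xcal$.

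To extend to a general $f \in \cl{\ran H}$, I would use that $P^T$ is an average of unitaries, hence $\norm{P^T}_{\Hcal\to\Hcal} \leq 1$ uniformly in $T$, and conclude by a standard $\eps/3$ approximation. Combining with the trivial case on $\ker H$ then finishes the proof.

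The one step that needs care is the identity $(\ker H)^\perp = \cl{\ran H}$, which must be justified via self-adjointness of $H$; this is really the only input beyond Stone's theorem and unitarity. An alternative route is to invoke the spectral theorem directly, writing $P^T - P = \int_{\R\setminus\pbr{0}} \sinc(T\lambda) \di E(\lambda)$ and concluding by dominated convergence against the spectral measure $\di\norm{E(\lambda)f}^2$. I would nevertheless favour the functional-analytic approach sketched above, because the telescoping identity foreshadows precisely how the improved quantitative estimates of the paper must be built by splitting off a small spectral window around the origin and then controlling the remainder by a DoS bound.
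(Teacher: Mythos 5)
Your argument is correct, but it is essentially Riesz's proof rather than von Neumann's, and the paper explicitly chooses von Neumann's spectral route (the Riesz route is mentioned in the ``Previous results'' discussion as the one that displaced it in textbooks). The paper writes $(P^T-P)f=\int_{\R\setminus\{0\}}\sinc(T\lambda)\,\di E(\lambda)f$ via Stone's theorem and concludes by letting $T\to\infty$ against the finite spectral measure; you instead use self-adjointness to decompose $\Hcal=\ker H\oplus\cl{\ran H}$, dispose of $\ker H$ trivially, establish the telescoping identity $P^T(Hg)=\tfrac{1}{2\ii T}(U_Tg-U_{-T}g)$ on $\ran H$, and close by density using $\|P^T\|_{\Hcal\to\Hcal}\le 1$. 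Both proofs are complete; yours is more elementary in that it needs nothing beyond Stone's theorem and the identity $(\ker H)^\perp=\cl{\ran H}$ for self-adjoint $H$, whereas the paper's leans on the full resolution of the identity. The trade-off is the point of the paper: the $\sinc$ representation is exactly the object whose low- and high-frequency pieces get estimated under the DoS hypothesis in Theorem \ref{thm:main}, while the telescoping bound $\|P^T(Hg)\|\le\|g\|/T$ carries a constant depending on the preimage $g$, not on $f$ in a norm one can fix in advance, so it does not by itself lead to a uniform rate on a subspace $\Xcal$. Your closing remark that the telescoping identity ``foreshadows precisely how the improved quantitative estimates must be built'' therefore overstates the connection: the quantitative refinement in the paper is built from the spectral formula \eqref{eq:von-neumann}, not from the telescoping identity, and the $T^{-1}$ you see on $\ran H$ is really the spectral-gap heuristic in disguise.
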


\begin{proof}[Sketch of proof]
The original proof relies on Stone's theorem (and the spectral theorem, by proxy), i.e. the fact that $U_t$ has a resolution of the identity $\{E(\lambda)\}_{\lambda\in\R}$ (for a precise definition see Definition \ref{def:res-id} below) for which $U_t=\int_{\R}e^{it\lambda}\di E(\lambda)$. This leads to:
	\begin{align}
	(P^T-P)f
	&=
	\frac{1}{2T}\int_{-T}^TU_tf\,\di t-Pf
	=
	\frac{1}{2T}\int_{-T}^T\int_\R e^{it\lambda}\,\di E(\lambda)f\,\di t-Pf\notag\\
	&=
	\frac{1}{2T}\int_{-T}^T\int_{\R\setminus\{0\}} e^{it\lambda}\,\di E(\lambda)f\,\di t
	=
	\int_{\R\setminus\{0\}} \frac{\sin T\lambda}{T \lambda}\,\di E(\lambda)f.\label{eq:von-neumann}
	\end{align}
This last expression tends to $0$ as $T\to+\infty$.
\end{proof}

The strong convergence  $P^T\to P$ can be improved to uniform convergence if $H$ has a spectral gap:

\begin{theorem}[Ergodic theorem: case of spectral gap]\label{thm:gap}
Assume that there exists $\gamma>0$ such that $\sigma(H)\subset (-\infty,-\gamma]\cup\{0\}\cup[\gamma,+\infty)$. Then
	\begin{equation}\label{eq:spec-gap}
	\|P^T-P\|_{\Hcal\to\Hcal}\leq \gamma^{-1}T^{-1},\qquad\forall T>1.
	\end{equation}
\end{theorem}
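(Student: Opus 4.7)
The plan is to start directly from the spectral representation derived in the proof sketch of Theorem \ref{thm:von}, namely
\begin{equation*}
(P^T-P)f = \int_{\R\setminus\{0\}} \frac{\sin T\lambda}{T\lambda}\,\di E(\lambda)f,
\end{equation*}
and to exploit the spectral gap through the functional calculus. I would write this as $(P^T-P) = \phi_T(H)(I-P)$ where $\phi_T(\lambda) := \frac{\sin T\lambda}{T\lambda}$ (for $\lambda\ne 0$, and zero at $\lambda=0$), and use the standard fact that the operator norm of a bounded Borel function of a self-adjoint operator equals the essential supremum of the function over the spectrum.

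Under the spectral gap assumption, the support of $dE$ away from $\{0\}$ is contained in $(-\infty,-\gamma]\cup[\gamma,+\infty)$, so the relevant supremum is
\begin{equation*}
\sup_{|\lambda|\geq\gamma} \left|\frac{\sin T\lambda}{T\lambda}\right| \leq \sup_{|\lambda|\geq\gamma} \frac{1}{T|\lambda|} = \frac{1}{T\gamma},
\end{equation*}
using only $|\sin x|\le 1$. This immediately gives \eqref{eq:spec-gap}. To make the argument fully self-contained, one can alternatively compute $\|(P^T-P)f\|^2$ directly via the spectral measure $\mu_f(A) := \|E(A)f\|^2$, obtaining
\begin{equation*}
\|(P^T-P)f\|^2 = \int_{|\lambda|\geq\gamma} \left|\frac{\sin T\lambda}{T\lambda}\right|^2 \di\mu_f(\lambda) \leq \frac{1}{T^2\gamma^2}\,\mu_f(\R) \leq \frac{1}{T^2\gamma^2}\|f\|^2.
\end{equation*}
Taking square roots and a supremum over unit vectors $f$ yields the claimed operator-norm bound.

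There is essentially no serious obstacle here; the result follows from the already-derived spectral representation combined with the trivial scalar estimate $|\sin T\lambda/(T\lambda)|\le (T|\lambda|)^{-1}$, which the spectral gap turns into the uniform bound $(T\gamma)^{-1}$. The condition $T>1$ in the statement plays no role in this estimate (it is presumably included only so that the bound is meaningful and to anticipate the form of the forthcoming gapless results); the inequality in fact holds for every $T>0$. What the calculation also makes clear, and what motivates the rest of the paper, is that the whole argument collapses when the spectrum of $H$ accumulates at $0$, since then $\sup_{\lambda\in\sigma(H)\setminus\{0\}} 1/(T|\lambda|) = +\infty$ and one must instead exploit the rate at which $dE(\lambda)$ itself vanishes as $\lambda\to 0$, i.e.\ the density of states.
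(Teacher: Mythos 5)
Your proof is correct and is essentially the same as the paper's: the paper defers to Remark \ref{rek:gap}, which observes that in the proof of Theorem \ref{thm:main} the low-frequency integral is vacuous under the gap assumption and the high-frequency estimate $\left\|\int_{|\lambda|\geq\gamma}\sinc(T\lambda)\,\di E(\lambda)f\right\|_{\Hcal}^2\leq (T\gamma)^{-2}\|f\|_{\Hcal}^2$ gives the claim --- exactly your computation with $r=\gamma$. Your side remark that the restriction $T>1$ is inessential here is also accurate.
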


\begin{proof}
See Remark \ref{rek:gap} below.
\end{proof}

\subsection{The spectral theorem}
Since the spectral theorem and the resolution of the identity of  self-adjoint operators play a central role in this paper, we recall some basic facts.

\begin{definition}[Resolution of the identity]\label{def:res-id}
Let  $H:D(H)\subset\Hcal\to\Hcal$ be a self-adjoint operator. Its associated \emph{resolution of the identity} $\{E(\lambda)\}_{\lambda\in\R}$  is a family of projection operators in $\Hcal$ with the property that, for each $\lambda\in\R$, the subspace $\Hcal^\lambda=E(\lambda)\Hcal$ is the largest closed subspace such that

{i.} $\Hcal^\lambda$ \emph{reduces} $H$, namely, $HE(\lambda)g=E(\lambda)Hg$ for every $g\in D(H)$. In particular, if $g\in D(H)$ then also $E(\lambda)g\in D(H)$.

{ii.}
$(Hu,u)_{\Hcal}\leq \lambda (u,u)_{\Hcal}$ for every $u\in\Hcal^\lambda\cap D(H)$.
\end{definition}
Now we are able to state the spectral theorem:
\begin{theorem}[Spectral theorem]
Let  $H:D(H)\subset\Hcal\to\Hcal$ be a self-adjoint operator and let  $\{E(\lambda)\}_{\lambda\in\R}$ be the associated resolution of the identity. Then $\{E(\lambda)\}_{\lambda\in\R}$ is unique, and the identity $H=\int_\R\lambda\,\di E(\lambda)$ holds.
\end{theorem}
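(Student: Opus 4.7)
The plan is to prove the theorem in three stages: existence of the resolution of the identity, uniqueness, and the integral representation. For existence I would proceed via the Cayley transform: set $V := (H - \ii I)(H + \ii I)^{-1}$, which extends to a unitary operator on $\Hcal$. The spectral theorem for bounded unitary operators, established via the continuous functional calculus on the Banach algebra $C(\sigma(V))$ and extended to bounded Borel functions by the Riesz representation theorem applied to the sesquilinear form $\phi \mapsto (\phi(V)u, v)$, yields a spectral measure $\{F(\theta)\}_{\theta \in [-\pi, \pi)}$ such that $V = \int e^{\ii\theta}\,\di F(\theta)$. Pulling back via the inverse map $\lambda = -\cot(\theta/2)$, I define $E(\lambda) := F(\theta(\lambda))$ for $\lambda \in \R$. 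One then checks that $E(\lambda)$ commutes with bounded functions of $V$, hence with the resolvents $(H \pm \ii I)^{-1}$, and therefore reduces $H$ in the sense of Definition \ref{def:res-id}(i); the upper bound in (ii) translates into a pointwise inequality for bounded Borel functions of $V$, which follows directly from the functional calculus.

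Uniqueness is immediate from the maximality clause in the definition: if $\{E(\lambda)\}$ and $\{E'(\lambda)\}$ both satisfied the defining properties, then for each $\lambda$ the subspaces $E(\lambda)\Hcal$ and $E'(\lambda)\Hcal$ would both be \emph{the} largest closed subspace meeting (i) and (ii), and therefore must coincide, whence $E(\lambda) = E'(\lambda)$.

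For the integral representation, I would define the scalar measures $\di\mu_{u,v}(\lambda) := \di(E(\lambda)u, v)$ for $u,v \in \Hcal$, and show that $u \in D(H)$ if and only if $\int_\R \lambda^2\,\di\mu_{u,u}(\lambda) < +\infty$, in which case $(Hu, v) = \int_\R \lambda\,\di\mu_{u,v}(\lambda)$. The strategy is to approximate $\lambda \mapsto \lambda$ by the truncated functions $\lambda \chi_{[-N, N]}(\lambda)$, define bounded operators $H_N := \int_{-N}^N \lambda\,\di E(\lambda)$, and pass to the limit $N \to \infty$ on a dense subspace of $\Hcal$. The main obstacle is reconciling the domain of the integral with $D(H)$: the truncated approximation readily yields agreement on a dense subset, but promoting this to equality as unbounded self-adjoint operators requires exploiting the fact that both $H$ and $\int_\R \lambda\,\di E(\lambda)$ possess the same Cayley transform $V$, together with the fact that a symmetric operator is uniquely determined by its Cayley transform on the latter's range.
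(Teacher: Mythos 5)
The paper states the spectral theorem as classical background material and offers no proof, so there is no in-paper argument to compare against. Your Cayley-transform strategy is the standard route and its outline is sound: pass to the unitary $V=(H-\ii I)(H+\ii I)^{-1}$, build its spectral measure from the continuous functional calculus plus Riesz--Markov, pull the measure back to $\R$, and then handle the unbounded integral by truncation together with a self-adjointness argument (since both $H$ and $\int_\R\lambda\,\di E(\lambda)$ are self-adjoint, a one-sided inclusion of graphs forces equality, equivalently the two operators share the Cayley transform $V$). The uniqueness argument from maximality is likewise correct as far as it goes.

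There is, however, one genuine gap in linking your construction to the paper's definition. Definition~\ref{def:res-id} characterizes $\Hcal^\lambda=E(\lambda)\Hcal$ as the \emph{largest} closed subspace that reduces $H$ and on which $(Hu,u)_\Hcal\leq\lambda(u,u)_\Hcal$. You verify that the pullback $E(\lambda)=F(\theta(\lambda))$ yields a subspace satisfying these two properties, but you never argue maximality. Without it you have produced \emph{a} spectral family with properties (i)--(ii), not shown that it is ``the associated resolution of the identity'' in the paper's sense; and your uniqueness step, which compares two families each assumed to be maximal, does not yet apply to your constructed one. The repair is short: given any closed subspace $\Mcal$ satisfying (i)--(ii) at a fixed level $\lambda_0$, observe that $\Mcal$ reduces the resolvents of $H$ and hence reduces $V$, and that the Cayley transform of $H|_\Mcal$ is $V|_\Mcal$. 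Since $H|_\Mcal\leq\lambda_0 I_\Mcal$, the spectrum of $V|_\Mcal$ lies in the arc which is the Cayley image of $(-\infty,\lambda_0]$, so $F(\theta(\lambda_0))$ restricts to the identity on $\Mcal$ and therefore $\Mcal\subseteq E(\lambda_0)\Hcal$. Inserting this lemma closes the argument.
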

In addition to the above, it is useful to state the definition of the spectral measure:
\begin{definition}[Spectral measure]
Given any $f,g\in \Hcal$ the resolution of the identity defines a complex function of bounded variation on the real line, given by
	\begin{equation*}\label{eq:spec-meas}
	\R\ni\lambda\mapsto(E(\lambda)f,g)_{\Hcal}.
	\end{equation*}
This  function gives rise to a complex measure (depending on $f,g$) called the \emph{spectral measure}.
\end{definition}

%\begin{proposition}[{\cite[X-\S1.2, Theorem 1.5]{Kato1995}}]\label{prop:abs-cont-subs}
%Let $U\subset\R$ be open. The set of $f,g\in\Hcal$ for which the spectral measure is absolutely continuous in $U$ with respect to the Lebesgue measure forms a closed subspace $\mathcal{AC}_U\subset\Hcal$. This subspace is referred to as the \emph{absolutely continuous subspace of $H$ on $U$}.
%\end{proposition}

\begin{definition}[Density of states]
Let $\Xcal\subset\Hcal$ be some closed subspace. We call the bilinear form
\begin{equation*}
	\frac{\di}{\di\lambda}\left(E(\lambda)\cdot,\cdot\right)_{\Hcal}:\mathcal X \times \mathcal X\to\C
	\end{equation*}
the \emph{density of states of $H$ at $\lambda$ on the subspace $\Xcal$}.
\end{definition}

\subsection{Main results}
As mentioned above, we assume the opposite of a spectral gap: we assume that $\sigma(H)$ contains a neighborhood of $0$. However, we do not want to have ``too much'' spectrum near $0$. We make this precise as follows. Let $\{E(\lambda)\}_{\lambda\in\R}$ be the resolution of the identity of $H$. Our main assumption is:
\begin{assumption}\label{ass1}
There exist

\emph{i}. a Banach subspace $\Xcal\subset\Hcal$ which is dense in $\Hcal$  in the topology of $\Hcal$, is continuously embedded in $\Hcal$ (and therefore the norm $\|\cdot\|_\Xcal$ is stronger than the norm $\|\cdot\|_\Hcal$),

\emph{ii}. a real number $r\in(0,1)$ and a function $\psi:[-r,r]\to\R$ that is strictly positive a.e. on $I_r=[-r,r]$ such that the following bound of the DoS of $H$ holds:
	\begin{equation}\label{eq:dos-bound}
	\left|\frac{\di}{\di\lambda}\left(E(\lambda)f,g\right)_{\Hcal}\right|
	\leq
	\psi(\lambda)\|f\|_\Xcal\|g\|_\Xcal,\qquad\forall f,g\in\Xcal,\,\forall\lambda\in I_r\setminus\{0\},
	\end{equation}

\emph{iii}. a real number $q>0$ such that  $|\lambda|^{-q}\psi(\lambda)\in L^1(I_r)$.
\end{assumption}

% We also consider a slightly stronger assumption which leads to improved bounds (see \eqref{eq:conv-rate2} below):
% \begin{assumption}\label{ass2}
% There exist a dense subspace $\Xcal\subset\Hcal$ and $r,p,C>0$ such that the following bound on the DoS of $H$ holds:
% 	\begin{equation}\label{eq:dos-bound2}
% 	\left|\frac{\di}{\di\lambda}\left(E(\lambda)f,g\right)_{\Hcal}\right|
% 	\leq
% 	C|\lambda|^{p-1}\|f\|_\Xcal\|g\|_\Xcal,\qquad\forall f,g\in\Xcal,\,\forall\lambda\in I_r\setminus\{0\},
% 	\end{equation}
% where $I_r=(-r,r)$
% \end{assumption}

%To state our main theorem we first define the continuous and strictly increasing function $\Psi_q:(0,r)\to(0,\Psi_q(r))$ as
%	\begin{equation*}
%	\Psi_q(\eps)
%	:=
%	\left(\int_{I_\eps} \psi(\lambda)^q\,\di\lambda\right)^{1/q},\qquad\forall\eps\in(0,r),
%	\end{equation*}
%and we define the continuous and strictly increasing function $\Xi_q:(0,r^2\Psi_q(r))\to(0,r)$ as the inverse of the function $\eps \mapsto \eps^2 \Psi_q(\eps)$:
%	\begin{equation*}
%	\Xi^{-1}_q(\eps):=\eps^2 \Psi_q(\eps),\qquad\forall\eps\in(0,r).
%	\end{equation*}
%We note that both functions can be extended to $0$ as continuous functions by defining $\Psi_q(0)=\Xi_q(0)=0$. \red{Finally, we define
%	\begin{equation*}
%	S_p:=\left\|\frac{\sin(\cdot)}{\cdot}\right\|_{L^p(\R)},\quad\text{where $p$ satisfies }\frac2p+\frac1q=1.
%	\end{equation*}}
Our main result, to be proved in Section \ref{sec:main}, is:
\begin{theorem}\label{thm:main}
Under Assumption A\ref{ass1}, letting $\ell = \min\{q,2\}$, the following uniform rate in von Neumann's ergodic theorem holds:
	\begin{equation}\label{eq:conv-rate}
	\|P^T-P\|_{\Xcal\to\Hcal}\leq \frac{C}{T^{\ell/2}},\qquad\forall T>1,
	\end{equation}
where $C$ is an explicit constant that does not depend on $T$.
\end{theorem}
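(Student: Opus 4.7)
The plan is to combine the sine integral representation \eqref{eq:von-neumann} with the spectral theorem, yielding, for each $f\in\Xcal$,
\begin{equation*}
\|(P^T-P)f\|_{\Hcal}^2
= \int_{\R\setminus\{0\}} \left(\frac{\sin T\lambda}{T\lambda}\right)^2 \di (E(\lambda)f,f)_{\Hcal}.
\end{equation*}
First I would split the domain of integration at $|\lambda|=r$. On the exterior region $|\lambda|>r$ the kernel $(\sin T\lambda/(T\lambda))^2$ is bounded by $(Tr)^{-2}$ and the remaining spectral mass is controlled by $\|f\|_{\Hcal}^2\leq C\|f\|_{\Xcal}^2$ via the continuous embedding $\Xcal\embed\Hcal$, producing a contribution of order $T^{-2}\|f\|_{\Xcal}^2$; since $\ell\leq 2$ this already fits within the target $T^{-\ell}\|f\|_{\Xcal}^2$.

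The substantive step is the interior region $|\lambda|\leq r$, where the DoS bound \eqref{eq:dos-bound} replaces $\di (E(\lambda)f,f)_{\Hcal}$ by $\psi(\lambda)\|f\|_{\Xcal}^2\di\lambda$ and reduces the problem to the scalar inequality
\begin{equation*}
\int_{I_r\setminus\{0\}}\left(\frac{\sin T\lambda}{T\lambda}\right)^2\psi(\lambda)\di\lambda\leq CT^{-\ell}.
\end{equation*}
The key pointwise estimate I would use is
\begin{equation*}
\left(\frac{\sin T\lambda}{T\lambda}\right)^2|\lambda|^{\ell}\leq \min\!\left\{1,\tfrac{1}{(T\lambda)^2}\right\}|\lambda|^{\ell}\leq T^{-\ell},\qquad T>1,\ \lambda\neq 0,
\end{equation*}
which is immediate when $|T\lambda|\leq 1$ (then $|\lambda|^{\ell}\leq T^{-\ell}$) and, when $|T\lambda|>1$, reduces to $|\lambda|^{\ell-2}\leq T^{2-\ell}$ (trivial if $\ell=2$; if $\ell=q<2$, use that $\lambda\mapsto|\lambda|^{\ell-2}$ is maximised on $\{|\lambda|>1/T\}$ at $|\lambda|=1/T$). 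Dividing through by $|\lambda|^{\ell}$ then produces a bound of the form $T^{-\ell}\int_{I_r}\psi(\lambda)/|\lambda|^{\ell}\di\lambda$, and the last integral is finite because $\ell\leq q$ and $|\lambda|^{-\ell}\psi(\lambda)=|\lambda|^{q-\ell}\cdot|\lambda|^{-q}\psi(\lambda)$ equals a bounded factor on $I_r$ times an $L^1(I_r)$ function, by assumption \emph{iii}.

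Combining the two regions and taking a square root yields \eqref{eq:conv-rate}. The step I expect to require the most care is locating the correct interpolation exponent so that an $L^1$-type DoS assumption of strength $q$ converts into a uniform operator bound of exponent $\min(q,2)/2$; the cap at $\ell=2$ is natural and reflects the optimal $T^{-1}$ rate already available in the spectral-gap setting of Theorem \ref{thm:gap}. Beyond that key pointwise inequality, everything is routine manipulation of the spectral calculus and bookkeeping of the resulting constant $C$.
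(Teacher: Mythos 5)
Your proposal is correct and follows essentially the same route as the paper's proof: the same low/high-frequency split at $|\lambda|=r$, the same high-frequency bound via $\sinc^2(T\lambda)\leq (Tr)^{-2}$ and the embedding $\Xcal\embed\Hcal$, and the same low-frequency reduction to $\sup_\lambda\bigl(|\lambda|^\ell\sinc^2(T\lambda)\bigr)\int_{I_r}|\lambda|^{-\ell}\psi(\lambda)\,\di\lambda$ with $\ell=\min\{q,2\}$ and $|\lambda|^{-\ell}\leq|\lambda|^{-q}$ on $I_r$. The only cosmetic difference is that you establish the key scaling inequality by a pointwise case split on $|T\lambda|\lessgtr 1$, whereas the paper rescales $y=T\lambda$ and bounds $\sup_y|y|^\ell\sinc^2 y\leq 1$; these are equivalent.
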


%\begin{remark}
%Considering the definitions of $\Psi$ and $\Xi$ we observe that
%\[
%	\lim_{T\to +\infty} \left( 2 \Psi\left( \Xi\left( T^{-2} \right) \right) \right)^{1/2} =\left( 2 \Psi\left( \Xi\left( 0 \right) \right) \right)^{1/2}= 0
%\]
%so that, indeed,
%	\begin{equation*}
%	\lim_{T\to+\infty}\|P^T-P\|_{\Xcal\to\Hcal}=0.
%	\end{equation*}
%\end{remark}

When $\psi(\lambda)$ is a power of $\lambda$ we immediately have:

%We consider the case of $\psi$ being a power of $\lambda$, that is a small modification of Assumption A\ref{ass1}:
%\begin{assumption}\label{ass2}
%Assume \emph{i} and \emph{ii} as in Assumption A\ref{ass1}.
%There exist
%
%\emph{iii}'. real numbers $c,p>0$ for which $\psi(\lambda) = c |\lambda|^{p-1}$ in $I_r\setminus\{0\}$.
%
%\end{assumption}

\begin{corollary}\label{cor:pol-log-bounds}
If $\psi$ has the form $\psi(\lambda) = c |\lambda|^{p-1}$ for some $c,p>0$, then \eqref{eq:conv-rate} holds with  $\ell = \min\{p-\eps,2\}$ for any $\eps>0$.
%
%
%	\begin{equation}\label{eq:conv-rate-frac}
%	\|P^T-P\|_{\Xcal\to\Hcal}\leq \frac{C}{T^{\ell/2}},\qquad\forall \,T>1
%	\end{equation}
%where
%\[
%	\ell = \min\{q,2\}, \qquad \forall\,q<p
%\]
%
%\noindent and $C$ is a constant that does not depend on $T$.
%
%	2. Assume that the following bound holds
%	\begin{equation*}
%		\psi(\lambda) = \lambda^{-1} |\log(\lambda)|^{-2},\qquad\lambda\in I_r\setminus\{0\}.
%	\end{equation*}
%
%	Then for all $T>0$ large enough,
%	\begin{equation}\label{eq:conv-rate-log}
%	\|P^T-P\|_{\Xcal\to\Hcal}\leq  \log\left( T \log\left(T\right)^{-1/2} \right)^{-1} .
%	\end{equation}
\end{corollary}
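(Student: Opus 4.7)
The plan is to reduce the statement to a direct application of Theorem \ref{thm:main}, verifying that items (i)--(iii) of Assumption \ref{ass1} are satisfied with a choice of $q$ depending on $\eps$. Since items (i) and (ii) are unchanged (the space $\Xcal$ and the bound \eqref{eq:dos-bound} with $\psi(\lambda)=c|\lambda|^{p-1}$ are part of the hypothesis), only item (iii) needs to be checked, i.e., one must select $q>0$ such that $|\lambda|^{-q}\psi(\lambda) = c|\lambda|^{p-1-q} \in L^1(I_r)$, which is equivalent to $p-1-q>-1$, that is, $q<p$.

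Next, I would split into two cases depending on whether $p\le 2$ or $p>2$. If $p\le 2$, for any $\eps\in(0,p)$ the choice $q:=p-\eps$ satisfies $q<p$, so Assumption \ref{ass1} holds and Theorem \ref{thm:main} yields the rate $\ell = \min\{q,2\} = \min\{p-\eps,2\}$. If $p>2$, any $q\in[2,p)$ works; Theorem \ref{thm:main} produces $\ell=\min\{q,2\}=2$, which equals $\min\{p-\eps,2\}$ for all sufficiently small $\eps>0$. Combining both cases gives precisely the stated rate $\ell=\min\{p-\eps,2\}$ for every $\eps>0$.

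There is essentially no obstacle here: the corollary is a one-line verification that Assumption \ref{ass1}(iii) holds for the specific power-type $\psi$, and the $\eps$-loss in the exponent is forced by the failure of $|\lambda|^{p-1-p}=|\lambda|^{-1}$ to be integrable at the origin, so $q=p$ itself is not admissible. The only subtlety worth mentioning explicitly is that the constant $C$ in \eqref{eq:conv-rate} will of course depend on $\eps$ (through the $L^1$-norm of $|\lambda|^{p-1-q}$ on $I_r$ blowing up as $q\uparrow p$), but since $\eps>0$ is fixed by the statement this causes no issue.
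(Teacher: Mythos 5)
Your argument is correct and coincides with the paper's own proof: both verify that Assumption \assref{ass1}(iii) for $\psi(\lambda)=c|\lambda|^{p-1}$ forces $q<p$, and then invoke Theorem~\ref{thm:main} with $\ell=\min\{q,2\}$, taking $q$ arbitrarily close to $p$. Your explicit case split on $p\le 2$ versus $p>2$ and the remark about the constant blowing up as $q\uparrow p$ are just spelled-out details of the same one-line observation.
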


\begin{proof}[Proof of Corollary \ref{cor:pol-log-bounds}]
Considering Assumption \ref{ass1}, if $\psi(\lambda)=c|\lambda|^{p-1}$, in order for $|\lambda|^{-q}\psi(\lambda)$ to be integrable in a neighborhood of $0$, we need $q<p$. The assertion follows from applying Theorem \ref{thm:main}.
\end{proof}

\begin{remark}
We note that the best rate that one could expect is $T^{-1}$, as in the case of a spectral gap (see \cite[Remark 3]{Dzhulai2011} for a detailed proof). Hence, in Corollary \ref{cor:pol-log-bounds} the rate cannot be improved beyond $T^{-1}$ even  if $p>2$.
\end{remark}

\begin{remark}
As we show in the examples in Section \ref{sec:ex}, there is a delicate interplay between the choice of subspace $\Xcal\subset\Hcal$ and the bound $\psi(\lambda)$ one can obtain for the DoS, both appearing in \eqref{eq:dos-bound}:
\begin{itemize}\item In the case of the linear Schr\"odinger equation in $\R^d$, taking $\Xcal=L^{2,s}$ (a weighted $L^2$ space) leads to $\psi(\lambda)\sim\lambda^{-1/2}$ (cf. \eqref{eq:dens-states-1.1}), while taking $\Xcal=L^1\cap L^2$ leads to $\psi(\lambda)\sim|\lambda|^{\frac d2-1}$ (cf. \eqref{eq:dens-states-1.2})
\item In the case of the linear wave equation in $\R^d$, taking $\Xcal=L^{2,s}$  leads to $\psi(\lambda)\sim1$ (cf. \eqref{eq:dens-states-2.1}), while taking $\Xcal=L^1\cap L^2$ leads to $\psi(\lambda)\sim|\lambda|^{d-1}$ (cf. \eqref{eq:dens-states-2.2}).\end{itemize}
\end{remark}

\begin{remark}
In view of \cite{Dzhulai2011} (see discussion of previous results and Theorem \ref{thm:dk} below) these results are \emph{nearly} optimal, and might in fact be optimal. Establishing whether or not this is the case is the subject of ongoing research.
\end{remark}

\subsection{Previous results}
The idea that the spectrum encodes information about the dynamical system goes all the way back to von Neumann and his original proof of the mean ergodic theorem \cite{VonNeumann1932a}. Soon after, Riesz \cite{Riesz1938} provided an alternative proof which does not rely on the spectral theorem and is considered simpler. As a result, in most textbooks the spectral approach appears to have been lost.

Kachurovskii and coauthors have published extensively over the last 20 years on the topic of rates of convergence in ergodic theorems,  by revisiting von Neumann's original ideas, relying on the spectral theorem \cite{Kachurovskii1996,Kachurovskii1999,Kachurovskii2010a,Kachurovskii2010,Kachurovskii2011,Dzhulai2011}. We also mention \cite{Assani2007} where some of the techniques of \cite{Kachurovskii1996} were simplified. The survey \cite{Kachurovskii2016} provides a detailed overview of this sequence of results. Closest in spirit to our result is \cite{Dzhulai2011}, where  the authors  show:

\begin{theorem}[\cite{Dzhulai2011}]\label{thm:dk}
For any function $f\in\Hcal$ and $p\in[0,2)$, the two following conditions are equivalent:

{i}. $\left(\left(E(\lambda)-E(-\lambda)-E(\{0\})\right)f,f\right)_\Hcal\leq A\lambda^p$, for all $\lambda>0$,

{ii}. $\|(P^T-P)f\|_\Hcal\leq \frac{B}{T^{p/2}}\|f\|_\Hcal$, for all $T>0$,

\noindent where given $A>0$ one can compute $B=B(A,p)>0$ explicitly, and, conversely, given $B>0$ one can compute $A=A(B,p)>0$ explicitly (and these dependencies do not involve the function $f$).
\end{theorem}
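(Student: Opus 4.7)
The plan is to write both statements as conditions on the scalar non-decreasing function
\begin{equation*}
\sigma(\lambda):=\bigl((E(\lambda)-E(-\lambda)-E(\{0\}))f,f\bigr)_\Hcal,\qquad\lambda>0,
\end{equation*}
which satisfies $\sigma(0+)=0$ and $\sigma(+\infty)\leq\|f\|_\Hcal^2$. Squaring \eqref{eq:von-neumann}, applying Parseval's identity in the spectral representation, and exploiting the evenness of $\lambda\mapsto(\sin T\lambda/T\lambda)^2$ yields the common master formula
\begin{equation*}
\|(P^T-P)f\|_\Hcal^2=\int_0^\infty\left(\frac{\sin T\lambda}{T\lambda}\right)^2\di\sigma(\lambda).
\end{equation*}
Condition $(i)$ is precisely $\sigma(\lambda)\leq A\lambda^p$, while $(ii)$ controls the left-hand side. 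So the whole theorem reduces to a tight two-sided comparison between $\sigma(1/T)$ and the above Fej\'er-type integral.

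For the direction $(i)\Rightarrow(ii)$, I would split the integral at $\lambda=1/T$. On $[0,1/T]$ the bound $(\sin T\lambda/T\lambda)^2\leq 1$ combined with $\sigma(1/T)\leq AT^{-p}$ gives a contribution at most $AT^{-p}$. On $[1/T,\infty)$ I would use $(\sin T\lambda/T\lambda)^2\leq (T\lambda)^{-2}$ and integrate by parts: the boundary contributions vanish (at $\lambda=1/T$ the term has a favourable sign, and at $\lambda=\infty$ it decays since $\sigma$ is bounded), leaving
\begin{equation*}
\int_{1/T}^\infty\frac{\di\sigma(\lambda)}{(T\lambda)^2}\leq\int_{1/T}^\infty\frac{2\sigma(\lambda)}{T^2\lambda^3}\di\lambda\leq\frac{2A}{T^2}\int_{1/T}^\infty\lambda^{p-3}\di\lambda=\frac{2A}{(2-p)T^p}.
\end{equation*}
The assumption $p<2$ is exactly what makes $\lambda^{p-3}$ integrable at infinity. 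Summing, I get $\|(P^T-P)f\|_\Hcal^2\leq (4-p)A/((2-p)T^p)$, which is $(ii)$ with the explicit constant $B^2=(4-p)A/((2-p)\|f\|_\Hcal^2)$.

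For the converse $(ii)\Rightarrow(i)$, I would invoke Jordan's inequality $\sin x\geq 2x/\pi$ on $[0,\pi/2]$, which forces $(\sin T\lambda/T\lambda)^2\geq 4/\pi^2$ whenever $T\lambda\leq\pi/2$. Restricting the master integral to $[0,\pi/(2T)]$ and discarding the rest gives
\begin{equation*}
\sigma\!\left(\frac{\pi}{2T}\right)\leq\frac{\pi^2}{4}\|(P^T-P)f\|_\Hcal^2\leq\frac{\pi^2 B^2\|f\|_\Hcal^2}{4T^p}.
\end{equation*}
Since $(ii)$ is assumed for every $T>0$, for an arbitrary $\lambda>0$ I can set $T:=\pi/(2\lambda)$ and read off $\sigma(\lambda)\leq 2^{p-2}\pi^{2-p}B^2\|f\|_\Hcal^2\,\lambda^p$, i.e.\ condition $(i)$ with an explicit $A=A(B,p)$ independent of $f$ beyond the factor $\|f\|_\Hcal^2$.

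The main obstacle is the forward implication: one must justify integration by parts against the Stieltjes measure $\di\sigma$ and confirm the exact role of the hypothesis $p<2$ (it prevents the tail integral from diverging). In contrast, the converse is essentially a one-line application of Jordan's inequality followed by the rescaling $T\leftrightarrow\lambda$, and in fact pinpoints why $p=2$ is the natural barrier, consistent with the spectral-gap rate $T^{-1}$.
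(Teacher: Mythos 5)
This theorem is quoted in the paper from \cite{Dzhulai2011} without proof, so there is no in-paper argument to compare against; judged on its own, your proof is correct and is essentially the standard (and original) one: fold the spectral integral in \eqref{eq:von-neumann} onto $(0,\infty)$, and compare the Fej\'er-type integral $\int_0^\infty \sinc^2(T\lambda)\,\di\sigma(\lambda)$ with $\sigma(1/T)$ from both sides, using $\sinc^2 x\leq\min\{1,x^{-2}\}$ for the forward direction (where $p<2$ is exactly the integrability threshold for the tail) and Jordan's inequality for the converse. This is also the same splitting-at-a-$T$-dependent-threshold strategy the paper itself uses to prove Theorem \ref{thm:main} in Section \ref{sec:main}, so the argument fits naturally. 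The integration by parts you flag is legitimate for a bounded nondecreasing $\sigma$ against a $C^1$ function vanishing at infinity (or can be replaced by Fubini on $(T\lambda)^{-2}=\int_\lambda^\infty 2T^{-2}s^{-3}\,\di s$). One small mismatch with the statement as printed: your explicit constants $B^2=(4-p)A/((2-p)\|f\|_\Hcal^2)$ and $A=2^{p-2}\pi^{2-p}B^2\|f\|_\Hcal^2$ do involve $\|f\|_\Hcal$, whereas the theorem's parenthetical claims they do not; this is an artefact of condition \emph{(ii)} carrying the factor $\|f\|_\Hcal$ while condition \emph{(i)} is stated with an absolute constant $A$, and the clean $f$-independent form of your forward bound is $\|(P^T-P)f\|_\Hcal^2\leq\frac{4-p}{2-p}\,A\,T^{-p}$.
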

%If $p\geq2$, the rate in \emph{ii} cannot exceed $T^{-1}$, and then the statement must be altered. The main difference between our result and this statement is that our result is uniform for all $f\in\Xcal$ (which is typically a strict subspace of $\Hcal$), whereas in the above statement the constants $C_1,C_2>0$ depend upon $f$. 

Another important direction of research relates  to the appearance of  the \emph{Fej\'er kernel} $\sinc^2 T\lambda=\frac{\sin^2 T\lambda}{T^2\lambda^2}$ in the expression for the square of $P^T-P$ (see \eqref{eq:von-neumann}). This is addressed in \cite{Kachurovskii2018a,Kachurovskii2018,Kachurovskii2018c}, for example.

Finally, it is natural to compare our result to the well-known RAGE theorem (see e.g. \cite{Cycon1987}). It states that for any  compact operator $K$ and any $f\in\Hcal$,
\[
	\lim_{T \to +\infty} \frac{1}{T} \int_{0}^{T} \left\| K U_t P_{\mathrm{ac}} f \right\|^2 \di t = 0,
\]
where $P_{\mathrm{ac}}$ is the orthogonal projection onto the absolutely continuous subspace of $H$. Where Theorem \ref{thm:main} proves uniform convergence to the projection onto the kernel of the generator, the RAGE theorem proves a weak convergence to $0$ of the time average of the evolution of the continuous part of the spectrum.

\subsection*{Organization of the paper}
Section \ref{sec:main} is devoted to the proof of the main theorem, Theorem \ref{thm:main}. In Section \ref{sec:ex} we apply this to the linear Schr\"odinger and wave equations and obtain decay estimates for averages of solutions.

\section{Proof of the main result}\label{sec:main}

\begin{proof}[Proof of Theorem \ref{thm:main}]
Our starting point is the observation \cite[V-\S2.1]{Kato1995}  that if the bilinear form $\frac{\di}{\di\lambda}\left(E(\lambda)\cdot,\cdot\right)_{\Hcal}:\Xcal\times\Xcal\to\C$ is bounded at a given $\lambda\in\R$, then there exists a bounded operator $A(\lambda):\Xcal\to\Xcal^*$ satisfying
	\begin{equation*}
	\left<A(\lambda)f,g\right>=\frac{\di}{\di\lambda}\left(E(\lambda)f,g\right)_{\Hcal},\qquad \forall f,g\in\Xcal,
	\end{equation*}
where $\langle\cdot,\cdot\rangle$ is the $(\Xcal^*,\Xcal)$ dual space pairing. Moreover, the operator norm of $A(\lambda)$ shares the same bound as the bilinear form.
Now, recalling von Neumann's proof as sketched in \eqref{eq:von-neumann} above, we have
	\begin{equation*}
	(P^T-P)f
	=
	\int_{\R\setminus\{0\}} {\sinc (T\lambda)}\,\di E(\lambda)f
	\end{equation*}
where $\sinc\, x:=\sin x/x$. We split this integral as follows:
	\begin{equation*}
	\int_{\R\setminus\{0\}}{\sinc (T\lambda)}\,\di E(\lambda)f
	=
	\left(\int_{I_r\setminus\{0\}}+\int_{I_r^c}\right) {\sinc (T\lambda)}\,\di E(\lambda)f
	\end{equation*}
where $I_r=[-r,r]$. We start by estimating the second integral (``high frequency'' part), using the fact that ${\sinc^2 x}\leq |x|^{-2}$ and that projections onto different spectral parameters are mutually orthogonal:
	\begin{align*}
%	\left\|\int_{I_\eps^c}\frac{\sin T\lambda}{T \lambda}\,\di E(\lambda)f\right\|_{\Xcal^*}^2
%	&\leq
	\left\|\int_{I_r^c}{\sinc (T\lambda)}\,\di E(\lambda)f\right\|_{\Hcal}^2
	&=
	\int_{I_r^c}{\sinc^2 (T\lambda)}\,\di (E(\lambda)f,f)_{\Hcal}\\
	&\leq
	\frac{1}{T^2 r^2}\int_{\R}\,\di (E(\lambda)f,f)_{\Hcal}
	=
	\frac{1}{T^2 r^2}\|f\|_{\Hcal}^2
	\leq
	\frac{1}{T^2 r^2}\|f\|_{\Xcal}^2.
	\end{align*}
Now we turn to the first integral (``low frequency'' part). We use the estimate \eqref{eq:dos-bound} of the DoS, as well as the boundedness of $|x|^{\ell}{\sinc^2 x}$ for $\ell\in[0,2]$:
	\begin{align*}
	\left\|\int_{I_r\setminus\{0\}}{\sinc (T\lambda)}\,\di E(\lambda)f\right\|_{\Hcal}^2
	&=
	\int_{I_r\setminus\{0\}}{\sinc^2 (T\lambda)}\,\di (E(\lambda)f,f)_{\Hcal}\\
	&=
	\int_{I_r}{\sinc^2 (T\lambda)}\left<A(\lambda)f,f\right>\di\lambda\\
	&\leq
	\left(\int_{I_r} {\sinc^2 (T\lambda)}\psi(\lambda)\,\di\lambda\right)\|f\|_{\Xcal}^2\\
	&\leq
	\sup_{\lambda\in I_r}\left(|\lambda|^{\ell} {\sinc^2 (T\lambda)}\right)\left(\int_{I_r} |\lambda|^{-\ell}\psi(\lambda)\,\di\lambda\right)\|f\|_{\Xcal}^2.
	\end{align*}
Letting $\ell = \min\{q,2\}$, and using the fact that $|\lambda|^{-\ell}\leq|\lambda|^{-q}$ for $|\lambda|\leq r<1$, there holds
	\begin{align*}
	\left\|\int_{I_r\setminus\{0\}}{\sinc (T\lambda)}\,\di E(\lambda)f\right\|_{\Hcal}^2
	&\leq
	\sup_{\lambda\in I_r}\left(|\lambda|^{\ell}{\sinc^2 (T\lambda)}\right)\left(\int_{I_r} |\lambda|^{-q}\psi(\lambda)\,\di\lambda\right)\|f\|_{\Xcal}^2\\
	&=
	\frac{1}{T^{\ell}}\sup_{y\in I_{r T}}\left({|y|^{\ell}} {\sinc^2 y}\right)\Psi_{q}(r)\|f\|_{\Xcal}^2
	\leq
	\frac{1}{T^{\ell}}\Psi_{q}(r)\|f\|_{\Xcal}^2
	\end{align*}
where we have denoted  $\Psi_q(r):=\int_{I_r} |\lambda|^{-q}\psi(\lambda)\,\di\lambda$.
Altogether, both estimates lead to
	\begin{equation}\label{eq:rate-final}
	\|(P^T-P)f\|_{\Hcal}^2
	\leq
	\frac{1}{T^{\ell}}\left(\Psi_q(r)
	+
	\frac{1}{T^{2-{\ell}}r^2}\right)\|f\|_{\Xcal}^2
	\end{equation}
which completes the proof.
\end{proof}

\begin{remark}[The constant in \eqref{eq:rate-final}]
The constant in \eqref{eq:rate-final} is indeed uniformly bounded independent of $T$:
	\begin{equation*}
	C=\Psi_q(r)
	+
	\frac{1}{T^{2-{\ell}}r^2}
	\leq
	\Psi_q(r)
	+
	\frac{1}{r^2},\qquad\forall T>1.
	\end{equation*}
In fact, it even decreases slowly with $T$ and one could attempt to optimize it by letting $r$ tend to $0$ at an appropriate rate. However, there is nothing to be gained by doing this. This is simply an artefact due to the condition  $|\lambda|^{-q}\psi(\lambda)\in L^1(I_r)$ in Assumption \ref{ass1}: this is an open condition, in the sense that one could always increase $q$ slightly and this condition will still hold. Doing this will cause $C$ to converge to a constant independent of $T$.
\end{remark}

\begin{remark}[Spectral gap]\label{rek:gap}
In the case of a spectral gap \eqref{eq:spec-gap} immediately follows. Indeed, with gap of size $\gamma$ in the above proof $\psi$ is trivial and one has
	\begin{equation*}
	\|(P^T-P)f\|_{\Hcal}
	\leq
	\gamma^{-1}T^{-1}
	\|f\|_{\Hcal}.
	\end{equation*}
Note that in this case the subspace $\Xcal$ is no longer needed.
\end{remark}

\section{Examples}\label{sec:ex}
\subsection{The Laplace operator}
Let $\phi:[0,+\infty)\to[0,+\infty)$ be some continuous and strictly increasing function and define  $H=\phi(-\Delta)$ as a function of the Laplace operator  acting in $\Hcal=L^2(\R^d)$ with an appropriate domain for self-adjointness. Note that if $\phi(x)=x$ is the identity, then $-iH$ is the generator of the \emph{Schr\"odinger equation}:
	\begin{equation*}
	\left\{\begin{split}
	&\p_tf(t,x)=i\Delta f(t,x), &t\in\R,\,x\in\R^d,\\
	&f(0,x)=f_0(x),&x\in\R^d.
	\end{split}\right.
	\end{equation*}
Let $\{E(\lambda)\}_{\lambda\in\R}$ be the resolution of the identity of $H$. We use the fact that the Fourier transform is a unitary map relating $-\Delta$ to multiplication by $|\xi|^2$ in order to get:
	\begin{equation}\label{eq:ex1-bilin}
	\left(E(\lambda) f,g\right)_\Hcal
	=
	\int_{\phi(|\xi|^2)\leq\lambda}\widehat{f}(\xi)\overline{\widehat{g}(\xi)}\,\di\xi,\qquad\lambda\geq0.
	\end{equation}
Let us show how different choices of subspaces $\Xcal$  in the DoS estimate \eqref{eq:dos-bound} can give rise to different results.

%%%%%%%%%%%%%%%%%%%%%%%%%%%%%%%%%%%
\subsubsection{Hilbertian subspace}
Differentiating \eqref{eq:ex1-bilin} in $\lambda$ we get
	\begin{equation}\label{eq:ex1-dens}
	\frac{\di}{\di\lambda}\Big|_{\lambda=\lambda_0}\left(E(\lambda) f,g\right)_\Hcal
	=
	\int_{|\xi|=\sqrt{\phi^{-1}(\lambda_0)}}\widehat{f}(\xi)\overline{\widehat{g}(\xi)}\left|\nabla\left(\phi(|\xi|^2)\right)\right|^{-1}\,\di\sigma
	\end{equation}
where $\di\sigma$ is the Lebesgue (uniform) surface measure on the $d-1$-dimensional sphere of radius $\sqrt{\phi^{-1}(\lambda_0)}$.  The term $\left|\nabla\left(\phi(|\xi|^2)\right)\right|^{-1}
	=
	\frac{1}{2|\xi|\phi'(|\xi|^2)}$
comes from the coarea formula \cite[Appendix C3]{Evans2010}. An evaluation of the $L^2$ functions $\widehat{f}$ and $\widehat{g}$ on the hypersurface $\{|\xi|=\sqrt{\phi^{-1}(\lambda_0)}\}$ only makes sense if they  belong to any Sobolev space $H^s(\R^d)\subset L^2(\R^d)$ with $s>1/2$ by the trace lemma\footnote{This is not entirely optimal, since we are not making use of the fact that this hypersurface is in fact a sphere.}. The functions $\widehat{f}$ and $\widehat{g}$ belong to $H^s(\R^d)$ if and only if $f$ and $g$ belong to $L^{2,s}(\R^d)$, defined as
	\begin{equation*}
	L^{2,s}(\R^d):=\left\{f\in L^2(\R^d)\;:\;\|f\|_{L^{2,s}(\R^d)}^2:=\int_{\R^d}|f(x)|^2(1+|x|^2)^{s}\,\di x<+\infty\right\}.
	\end{equation*}
We therefore conclude that we can bound \eqref{eq:ex1-dens} using the  $L^{2,s}$-norms of $f$ and $g$, which are stronger than their $\Hcal$-norms:
	\begin{equation}\label{eq:dens-ex1}
	\left|\frac{\di}{\di\lambda}\Big|_{\lambda=\lambda_0}\left(E(\lambda) f,g\right)_\Hcal\right|\leq
	\frac{1}{2\sqrt{\phi^{-1}(\lambda_0)}\phi'(\phi^{-1}(\lambda_0))}
	\|{f}\|_{L^{2,s}}\|{g}\|_{L^{2,s}},
	\end{equation}
where  we denote $L^{2,s}$ rather than $L^{2,s}(\R^d)$ for brevity. Hence, using the notation of \eqref{eq:dos-bound}, we have that
	\begin{align*}
	\Xcal&=L^{2,s}(\R^d)\\
	\psi(\lambda)&=\frac{1}{2\sqrt{\phi^{-1}(\lambda)}\phi'(\phi^{-1}(\lambda))}.
	\end{align*}
In the case of the Schr\"odinger equation ($\phi=\id$) we get
	\begin{equation}\label{eq:dens-states-1.1}
	\psi(\lambda)=\frac{1}{2\sqrt{\lambda}}
	\end{equation}
and from Corollary \ref{cor:pol-log-bounds} we get a convergence rate of $T^{-\ell/2}$ for all $\ell < \frac{1}{2}$. Moreover, since $-\Delta$ has no eigenvalues in this setting (and, in particular, a trivial kernel), we conclude that
	\begin{equation*}\label{eq:schr-decay}
	\left\|\frac{1}{2T}\int_{-T}^Te^{-it\Delta}f_0\,\di t\right\|_{L^{2}}
	=
	\|P^Tf_0\|_{L^{2}}
	\leq
	CT^{-\ell/2}
	\|f_0\|_{L^{2,s}},\qquad\forall \ell<\frac12.
	\end{equation*}
This also implies that
	\begin{equation}\label{eq:schr-global}
	\|P^Tf_0\|_{L_T^q L_x^{2}([0,\infty)\times\R^d)}
	\leq
	C(q)
	\|f_0\|_{L_x^{2,s}(\R^d)},\qquad\forall q>4.
	\end{equation}
Restricting to any bounded domain $\Omega\subset\R^d$ we may take $L^2$ norms rather than weighted norms (the weight is uniformly bounded away from $0$ and $+\infty$ in $\Omega$) so we have
	\begin{equation*}\label{eq:schr-global-bounded}
	\|P^Tf_0\|_{L_T^qL_x^{2}([0,\infty)\times\Omega)}
	\leq
	C(q,\Omega)
	\|f_0\|_{L_x^{2}(\Omega)},\qquad\forall q>4.
	\end{equation*}

%%%%%%%%%%%%%%%%%%%%%%%%%%%%%%%%%%%%%%%%
\subsubsection{Non-Hilbertian subspace}
Considering \eqref{eq:ex1-dens} again, we may change variables so that the integration takes place on the unit sphere in $\R^d$:
	\begin{align*}
	\frac{\di}{\di\lambda}\Big|_{\lambda=\lambda_0}\left(E(\lambda) f,g\right)_\Hcal
	&=
	\int_{|\xi|=\sqrt{\phi^{-1}(\lambda_0)}}\widehat{f}(\xi)\overline{\widehat{g}(\xi)}\left|\nabla\left(\phi(|\xi|^2)\right)\right|^{-1}\,\di\sigma\\
	&=
	\sqrt{\phi^{-1}(\lambda_0)}^{d-1}\int_{\Sbb^{d-1}}\frac{\widehat{f}\left(\tau\sqrt{\phi^{-1}(\lambda_0)}\right)\overline{\widehat{g}\left(\tau\sqrt{\phi^{-1}(\lambda_0)}\right)}}{2\sqrt{\phi^{-1}(\lambda_0)}\phi'(\phi^{-1}(\lambda_0))}\,\di\tau\\
	&=
	\frac{\sqrt{\phi^{-1}(\lambda_0)}^{d-2}}{2\phi'(\phi^{-1}(\lambda_0))}\int_{\Sbb^{d-1}}\widehat{f}\left(\tau\sqrt{\phi^{-1}(\lambda_0)}\right)\overline{\widehat{g}\left(\tau\sqrt{\phi^{-1}(\lambda_0)}\right)}\,\di\tau
	\end{align*}
where $\di\tau$ is the uniform measure on the unit sphere in $\R^d$.
Another way to make sense of the restriction of $L^2$ functions to a hypersurface is if they are bounded, i.e. one can bound:
	\begin{align}
	\left|\frac{\di}{\di\lambda}\Big|_{\lambda=\lambda_0}\left(E(\lambda) f,g\right)_\Hcal\right|
	&\leq
	\frac{|\Sbb^{d-1}|\sqrt{\phi^{-1}(\lambda_0)}^{d-2}}{2\phi'(\phi^{-1}(\lambda_0))}\|\widehat{f}\|_{L^\infty(\R^d)}\|\widehat{g}\|_{L^\infty(\R^d)}\notag\\
	&\leq
	\frac{|\Sbb^{d-1}|\sqrt{\phi^{-1}(\lambda_0)}^{d-2}}{2\phi'(\phi^{-1}(\lambda_0))}\|f\|_{L^1(\R^d)}\|g\|_{L^1(\R^d)}.\label{eq:dens-ex2}
	\end{align}
Thus we obtain
	\begin{align*}
	\Xcal&=L^1(\R^d)\cap L^2(\R^d)\quad\text{with norm}\quad\|\cdot\|_\Xcal:=\|\cdot\|_{L^1(\R^d)}+\|\cdot\|_{L^2(\R^d)}\\
	\psi(\lambda)&=\frac{|\Sbb^{d-1}|\sqrt{\phi^{-1}(\lambda)}^{d-2}}{2\phi'(\phi^{-1}(\lambda))}.
%	\Psi(\eps)&=\int_{I_\eps\setminus\{0\}}\frac{|\Sbb^{d-1}|\sqrt{\phi^{-1}(\lambda)}^{d-2}}{2\phi'(\phi^{-1}(\lambda))}\,\di\lambda.
	\end{align*}
We again consider the Schr\"odinger case  ($\phi=\id$) where we obtain
	\begin{equation}\label{eq:dens-states-1.2}
	\psi(\lambda)=\frac12|\Sbb^{d-1}|\lambda^{\frac d2-1}.
	\end{equation}
%	 so that
%	\begin{equation*}
%	\Psi(\eps)=\int_{I_\eps\setminus\{0\}}\frac12|\Sbb^{d-1}|\lambda^{\frac d2-1}\,\di\lambda=\frac1d|\Sbb^{d-1}|\eps^{d/2}
%	\end{equation*}
From Corollary \ref{cor:pol-log-bounds} we get a convergence rate of $T^{-\ell/2}$ where $\ell= \min\{\frac{d}{2}-\eps,2\}$ for any $\eps >0$.
In particular, for dimensions $d\geq 5$ we obtain the optimal rate of convergence of  $T^{-1}$.
For any $f_0\in L^2 \cap L^1$, there holds
	\begin{equation*}\label{eq:schr-decay2}
	\left\|\frac{1}{2T}\int_{-T}^Te^{-it\Delta}f_0\,\di t\right\|_{L^2}
	=
	\|P^Tf_0\|_{L^2}
	\leq
	CT^{-\ell/2}
	\|f_0\|_{\Xcal},\quad \ell= \min\left\{\frac{d}{2}-\eps,2\right\}.
	\end{equation*}
This leads to the  global-in-time estimate for any $f_0\in L^2 \cap L^1$:
	\begin{equation}\label{eq:schr-global2}
	\|P^Tf_0\|_{L_T^qL^2_x([0,\infty)\times\R^d)}
	\leq
	C(q)
	\|f_0\|_{\Xcal},\qquad\forall q>\max\left\{\frac{4}{d},1\right\}.
	\end{equation}

\begin{remark}
It is natural to compare the  estimates \eqref{eq:schr-global} and \eqref{eq:schr-global2} with:

1) The well-known \emph{Strichartz estimates}
	\begin{equation*}
	\left\|e^{it\Delta/2}f_0\right\|_{L_t^qL_x^r(\R\times\R^d)}\leq C(d,q,r)\|f_0\|_{L_x^2(\R^d)}
	\end{equation*}
where $2\leq q,r\leq\infty$, $\frac 2q+\frac dr=\frac d2$ and $(q,r,d)\neq(2,\infty,2)$, see \cite{Tao2006a}.

2) \emph{Smoothing estimates}, such as
	\begin{equation*}
	\left\||D_x|^{1/2}e^{it\Delta}f_0\right\|_{L_t^2L^{2,-s}_{x}(\R\times\R^d)}
	\leq
	C(d)\|f_0\|_{L^2_x(\R^d)}
	\end{equation*}
where $s>1/2$, see \cite{Ben-Artzi1992}. A detailed comparison between these estimates is elusive at the present time, and is the subject of ongoing research.
\end{remark}

%%%%%%%%%%%%%%%%%%%%%%%%%%%%%%%
\subsection{The wave operator}
We now consider the linear, homogeneous wave equation
	\begin{equation*}
	\left\{\begin{split}
	&\p_{t}^2f(t,x)-\Delta f(t,x)=0, &t\in\R,\,x\in\R^d,\\
	&f(0,x)=f_0(x), \,\p_tf(0,x) = g_0(x) , &x\in\R^d.
	\end{split}\right.
	\end{equation*}
We let $\Hcal=L^2(\R^d)$ and consider the self-adjoint operator (with an appropriate domain) $H=-\Delta$.  We first need to convert the above problem into a first order system. We follow a well-known procedure: define
	\begin{equation}
	\label{def:fpm}
	f_\pm:=\frac12\left(\sqrt{H}f\pm i\p_tf\right).
	\end{equation}
Then we compute
	\begin{equation*}
	\p_tf_\pm
	=
	\frac12\left(\sqrt{H}\p_tf\pm i\p_t^2f\right)
	=
	\frac12\left(\sqrt{H}\p_tf\mp iHf\right)
	=
	\frac i2\sqrt{H}\left(-i\p_tf\mp \sqrt{H}f\right)
	=
	\mp i\sqrt{H}f_\pm.
	\end{equation*}
It follows that the vector
	\begin{equation*}
	\label{def:F}
	F(t,x):=\left(\begin{array}{c}f_+(t,x)\\f_-(t,x)
	\end{array}\right)
	\end{equation*}
satisfies the equation
	\begin{equation*}
	F'(t)
	=
	-iKF
\qquad\text{where}\qquad
	K=\left(\begin{array}{cc}\sqrt{H}&0\\0&-\sqrt{H}
	\end{array}
	\right).
	\end{equation*}
Denoting $\{E_{\sqrt{H}}(\lambda)\}_{\lambda\in\R}$ and $\{E_K(\lambda)\}_{\lambda\in\R}$ the resolutions of the identity of $\sqrt{H}$ and $K$, respectively, we first observe that $E_{-\sqrt{H}}(\lambda)=I-E_{\sqrt{H}}(-\lambda)$ so that
	\begin{equation*}
	E_K(\lambda)=E_{\sqrt{H}}(\lambda)\oplus(I-E_{\sqrt{H}}(-\lambda)),\qquad \forall\lambda\in\R.
	\end{equation*}
As in the case of the Schr\"odinger equation, we may consider two cases:
\subsubsection{Hilbertian subspace}
Inserting $\phi(H)=\sqrt{H}$ into the bound appearing in  \eqref{eq:dens-ex1} one finds that $\psi(\lambda)=1$ so that for $s>1/2$,
	\begin{equation}\label{eq:dens-states-2.1}
	\left|\frac{\di}{\di\lambda}\left(E_{\sqrt{H}}(\lambda) f,g\right)_{\Hcal}\right|\leq
	\|{f}\|_{L^{2,s}}\|{g}\|_{L^{2,s}},\qquad\forall\lambda\in\R.
	\end{equation}
This implies that
	\begin{equation*}
	\left|\frac{\di}{\di\lambda}\left(E_K(\lambda)F,G\right)_{\Hcal\oplus\Hcal}\right|\leq
	\left\|F\right\|_{L^{2,s}\oplus L^{2,s}}\left\|G\right\|_{L^{2,s}\oplus L^{2,s}},\qquad\forall\lambda\in\R.
	\end{equation*}
A bound on the DoS of the form $\psi(\lambda)=1$ leads to a convergence rate of $T^{-\ell/2}$ with $\ell <1$ from   Corollary \ref{cor:pol-log-bounds}. Therefore, noting that the kernel is empty:
	\begin{equation}\label{eq:wave-decay}
	\left\|\frac{1}{2T}\int_{-T}^Te^{-itK}F_0\,\di t\right\|_{\Hcal\oplus\Hcal}
%	=
%	\|P^TF_0\|_{L^{2,-s}\oplus L^{2,-s}}
	\leq
	CT^{-\ell/2}
	\|F_0\|_{L^{2,s}\oplus L^{2,s}},\qquad\forall \ell<1.
	\end{equation}
To obtain direct bounds for the average of the solution $f(t)$ of the wave equation, we use the identity $f = {H}^{-1/2}\left( f_{+} + f_{-}\right) $ to write
	\begin{align*}
	\frac{1}{2T} \int_{-T}^T f(t)\, \di t
	& =
	\frac{1}{2T} \int_{-T}^T {H}^{-1/2}\left( f_{+} + f_{-}\right)(t) \,\di t \\
	& =
	\frac{1}{2T} \int_{-T}^T {H}^{-1/2}\Big( \left( e^{-itK} F_0 \right)_1 + \left( e^{-itK} F_0 \right)_2 \Big) \,\di t.
%	& =
%	\sqrt{H}^{-1} \left( \left(P^{T}F_0\right)_1 + \left(P^{T}F_0\right)_2 \right).
	\end{align*}
Estimate \eqref{eq:wave-decay} leads then to
	\begin{align*}
	\left\| \frac{1}{2T} \int_{-T}^T f(t)\, \di t \right\|_{\Hcal}
	&\leq
	CT^{-\ell/2}
	\left(
	\left\|{H}^{-1/2} \left(F_0\right)_1\right\|_{L^{2,s}} + \left\|{H}^{-1/2} \left(F_0\right)_2\right\|_{L^{2,s}}
	\right)\notag\\
	&\leq
	CT^{-\ell/2}
	\left(
	\left\|f_0\right\|_{L^{2,s}} +\left\|{H}^{-1/2} g_0\right\|_{L^{2,s}}
	\right),\label{eq:wave-decay2}
	\end{align*}
with $\ell<1$ and where in the second inequality we have used \eqref{def:fpm}:
	\[
	2\left\|{H}^{-1/2}f_\pm(t=0)\right\|_{L^{2,s}}
	\leq
	\|f_0\|_{L^{2,s}} + \left\|{H}^{-1/2} g_0\right\|_{L^{2,s}}.
	\]
%which in turn implies that
%	\begin{equation}
%	\left\|P^{T}\left(f_0,g_0\right)\right\|_{L^{2,-s}}
%	\leq
%	CT^{-\frac{1}{3}}
%	\left(
%	\|f_0\|_{L^{2,s}} + \|\sqrt{H}^{-1} g_0\|_{L^{2,s}}
%	\right)
%	\end{equation}
Denoting $P^{T}\left(f_0,g_0\right) := \frac{1}{2T} \int_{-T}^T f(t) \,\di t$
for brevity, we deduce  the global-in-time estimate
	\begin{equation}\label{eq:wave-decay-global}
		\left\| P^{T}\left(f_0,g_0\right) \right\|_{ L^{q}_{T}L^{2}_x([0,\infty)\times\R^d) }
		\leq
		C(q)
		\left(
		\|f_0\|_{L^{2,s}(\R^d)} + \left\|{H}^{-1/2} g_0\right\|_{L^{2,s}(\R^d)}
		\right),\qquad\forall q>2.
	\end{equation}
\subsubsection{Non-Hilbertian subspace}
As before, taking the functional space to be $\Xcal=L^1\cap L^2$ (with norm $\|\cdot\|_\Xcal=\|\cdot\|_{L^1}+\|\cdot\|_{L^2}$), \eqref{eq:dens-ex2}  with $\phi(H)=\sqrt H$ leads to
	\begin{equation*}
	\psi(\lambda)=\frac{|\Sbb^{d-1}|\sqrt{\phi^{-1}(\lambda)}^{d-2}}{2\phi'(\phi^{-1}(\lambda))}=|\Sbb^{d-1}||\lambda|^{d-1}
	\end{equation*}
which then gives the bound
	\begin{equation}\label{eq:dens-states-2.2}
	\left|\frac{\di}{\di\lambda}\left(E_{\sqrt{H}}(\lambda) f,g\right)_{\Hcal}\right|\leq
	|\Sbb^{d-1}||\lambda|^{d-1}\|{f}\|_{L^{1}}\|{g}\|_{L^{1}},\qquad\forall\lambda\in\R.
	\end{equation}
Consequently, we have
	\begin{equation*}
	\left|\frac{\di}{\di\lambda}\left(E_K(\lambda)F,G\right)_{\Hcal\oplus\Hcal}\right|\leq
	|\Sbb^{d-1}||\lambda|^{d-1}\left\|F\right\|_{L^{1}\oplus L^{1}}\left\|G\right\|_{L^{1}\oplus L^{1}},\qquad\forall\lambda\in\R.
	\end{equation*}
This bound leads to a convergence rate of $T^{-\ell/2}$ where $\ell =\min\{d-\eps,2\}$ for any $\eps>0$ from   Corollary \ref{cor:pol-log-bounds}. For $d\geq3$ we obtain the optimal rate. Therefore, noting that the kernel is empty:
	\begin{equation*}\label{eq:wave-decay3}
	\left\|\frac{1}{2T}\int_{-T}^Te^{-itK}F_0\,\di t\right\|_{\Hcal\oplus\Hcal}
%	=
%	\|P^TF_0\|_{L^{2,-s}\oplus L^{2,-s}}
	\leq
	CT^{-\ell/2}
	\|F_0\|_{\Xcal\oplus \Xcal},\qquad \ell=\min\{d-\eps,2\}.
	\end{equation*}
Following the steps from the previous subsection, this leads to the global-in-time estimate
	\begin{equation}\label{eq:wave-decay-global2}
		\left\| P^{T}\left(f_0,g_0\right) \right\|_{ L^{q}_{T}L^{2}_x([0,\infty)\times\R^d) }
		\leq
		C(q)
		\left(
		\|f_0\|_{\Xcal} + \left\|{H}^{-1/2} g_0\right\|_{\Xcal}
		\right),\qquad\forall q>\max\left\{\frac2d,1\right\}.
	\end{equation}

\begin{remark}
Here we compare our estimates to \emph{Strichartz estimates} for the wave equation (see \cite{Tataru2001}):
	\[
	\| f \|_{L^q_tL^p_x(\R\times\R^d)} \leq C(d,q,p,s) \left( \| f_0 \|_{H^{s}(\R^d)} + \| g_0 \|_{H^{s-1}(\R^d)} \right)
	\]

\noindent for triplets satisfying $2 \leq p,q \leq \infty$ and
	\[
	\frac{1}{q} + \frac{d}{p} = \frac{d}{2} - s, \qquad \frac{2}{q} + \frac{d-1}{p} \leq \frac{d-1}{2} .
	\]

\noindent In particular, we can compare \eqref{eq:wave-decay-global} with the Strichartz estimate for $(q,p,s) = (\infty,2,0)$:
	\[
	\| f \|_{L^{\infty}_tL^2_x(\R\times\R^d)} \leq C(d) \left( \| f_0 \|_{L^2_x(\R^d)} + \| g_0 \|_{H^{-1}_x(\R^d)} \right).
	\]
\end{remark}

\bibliography{library}
\bibliographystyle{abbrv}
\end{document}